\documentclass[12pt,a4paper]{amsart}
\textwidth 16cm
\textheight 22cm
\headheight 0.5cm
\evensidemargin 0.3cm
\oddsidemargin 0.2cm
\usepackage{t1enc}
\usepackage[latin1]{inputenc}
\usepackage{amssymb}
\usepackage{amsthm}
\usepackage{amsmath}
\usepackage{latexsym}
\usepackage{array}
\usepackage{mathrsfs}
\usepackage[all]{xy}
\usepackage{graphics}
\usepackage{xcolor}
\usepackage{stmaryrd}
\usepackage{color}
\usepackage{enumerate}
\title[Rationality of cycles on function field of exceptional varieties]{Rationality of cycles on function field of exceptional projective homogeneous varieties}
\date{2 June 2013}
\subjclass[2010]{14C25; 20G41.}
\author{{Raphael Fino}}
\address
{UPMC Sorbonne Universit\'es\\
Institut de Math\'ematiques de Jussieu\\
Paris\\
FRANCE}
\address
{{\it Web page:}
{\tt www.math.jussieu.fr/\~{ }fino}}
\email {fino {\it at} math.jussieu.fr}
\numberwithin{equation}{section}
\theoremstyle{definition}

\newtheorem{rem}[equation]{Remark}
\newtheorem{lemme}[equation]{Lemma}

\newtheorem{prop}[equation]{Proposition}
\newtheorem{thm}[equation]{Theorem}
\newtheorem{cor}[equation]{Corollary}
\newcommand{\twoheadlongrightarrow}{\relbar\joinrel\twoheadrightarrow}
\begin{document}

\begin{abstract}
In this article we prove a result comparing rationality of algebraic cycles over the function field
of a projective homogeneous variety under a linear algebraic group of type $F_4$ or $E_8$
and over the base field, which can be of any characteristic.

\smallskip
\noindent \textbf{Keywords:} Chow groups and motives, exceptional algebraic groups,
projective homogeneous varieties.
\end{abstract}

\maketitle

\section{Introduction}

Let $G$ be a linear algebraic group of type $F_4$ or $E_8$ over a field $F$
and let $X$ be a projective homogeneous $G$-variety.
We write $Ch$ for the Chow group with coefficient in $\mathbb{Z}/p\mathbb{Z}$,
with $p=3$ when $G$ is of type $F_4$ and $p=5$ when $G$ is of type $E_8$.
The purpose of this note is to prove the following theorem dealing with 
rationality of algebraic cycles on function field of such a projective homogeneous $G$-variety.

\begin{thm}
\textit{For any equidimensional variety $Y$, the change of field homomorphism
\[Ch(Y)\rightarrow Ch(Y_{F(X)})\]
is surjective in codimension $<p+1$.
It is also surjective in codimension $p+1$ for a given $Y$ provided that 
$1\notin \text{deg}\;Ch_0(X_{F(\zeta)})$ for each generic point
$\zeta \in Y$.}
\end{thm}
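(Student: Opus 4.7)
The plan is to combine the Petrov--Semenov--Zainoulline motivic decomposition of $X$ modulo $p$ with a careful summand-by-summand analysis of lifts of cycles. First I would reduce to the case where $Y$ is irreducible with generic point $\zeta$, by additivity over irreducible components. Given $\alpha\in Ch^i(Y_{F(X)})$, the surjectivity of restriction along $Y_{F(X)}\hookrightarrow Y\times X$ (obtained by passing to the direct limit of the localization sequences over dense open subsets of $X$) produces a lift $\tilde\alpha\in Ch^i(Y\times X)$. The theorem will then reduce to exhibiting $\beta\in Ch^i(Y)$ with $\tilde\alpha-\mathrm{pr}_Y^*(\beta)$ supported on $Y\times X'$ for some proper closed subset $X'\subsetneq X$.

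The key input I would invoke is the Petrov--Semenov--Zainoulline decomposition
\[
M(X;\mathbb{Z}/p)\;\simeq\;\bigoplus_{j\in J} R(c_j)[2c_j],
\]
where $R=R_p(G)$ is the generalized Rost motive for $G$ at $p$. In both $(F_4,p=3)$ and $(E_8,p=5)$ the Poincar\'e polynomial of $R$ over a splitting field equals $1+t^{p+1}+t^{2(p+1)}+\cdots+t^{(p-1)(p+1)}$, so the smallest positive codimension in which $R$ carries a nonzero cycle is exactly $p+1$. This lets me decompose $Ch^i(Y\times X)$ as $\bigoplus_j Ch^{i-c_j}(Y\otimes R)$, and after base change to a splitting field $E/F$ of $R$ (for instance $E=F(X)$) each term further splits, by K\"unneth for the Tate motive $R_E$, as $\bigoplus_{\ell=0}^{p-1} Ch^{i-c_j-\ell(p+1)}(Y_E)$.

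The core argument would then be summand-by-summand. Summands with $c_j\geq 1$ correspond to cycles pushed forward from codimension $\geq c_j$ along $X$ and hence supported on a proper closed subset of $X$, so they lie in the kernel of restriction to $Y_{F(X)}$. The only summand that can survive is $c_j=0$; for $i<p+1$ the numerical constraint $i-\ell(p+1)\geq 0$ forces $\ell=0$, so the surviving contribution is $Ch^i(Y_E)$ pulled back via the unit $\mathbb{Z}/p\to R$. A Rost-nilpotence argument will then descend this contribution from $E$ to $F$ and identify it with $\mathrm{pr}_Y^*(Ch^i(Y))$, proving the first assertion. For the critical codimension $i=p+1$ the case $\ell=1$ additionally contributes a $\mathbb{Z}/p$-factor coming from the codimension-$(p+1)$ generator of $R$; this is the only possible source of an obstruction beyond $\mathrm{pr}_Y^*(Ch^{p+1}(Y))$.

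The hard part will be linking this obstruction to the arithmetic hypothesis $1\notin\deg Ch_0(X_{F(\zeta)})$. My plan is to use a Rost-type duality argument on the generalized Rost motive to show that the extra codimension-$(p+1)$ class descends to an $F(\zeta)$-rational cycle precisely when $X_{F(\zeta)}$ admits a zero-cycle of degree coprime to $p$, i.e.\ precisely when $1\in\deg Ch_0(X_{F(\zeta)})$, so that the stated hypothesis exactly kills the obstruction. Making this last step rigorous---matching the ``top'' rational generator inside $R$ against a degree-$1$ mod $p$ zero-cycle on $X_{F(\zeta)}$---will be the main technical work of the proof; it goes beyond formal motivic bookkeeping and relies on the duality theory of generalized Rost motives together with the compatibility of the PSZ decomposition under base change to $F(\zeta)$.
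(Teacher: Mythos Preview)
Your approach diverges from the paper's, and the divergence hides a real gap.

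The paper does \emph{not} work directly with $Ch^i(Y\times X)$. It first invokes a reduction due to Karpenko--Merkurjev (Proposition~3.1 here): to get surjectivity of $Ch^m(Y)\to Ch^m(Y_{F(X)})$ it suffices to show that $Ch^{m-i}(X)\to Ch^{m-i}(X_{F(y)})$ is surjective for every point $y\in Y$. This replaces the arbitrary $Y$ by a point, so one only has to control $Ch(X_L)$ for varying fields $L$. After the PSZ decomposition this becomes a question about $Ch^k(\mathcal{R}_p(G)_L)$ for $k\le p+1$, and the paper settles it by explicit computation: it shows $Ch^1=Ch^3=\cdots=Ch^{p+1}=0$ and $Ch^2=\mathbb{Z}/p$ when $J_p=(1)$, using the comparison of the $\gamma$- and topological filtrations on $K(\mathfrak B)$ (Proposition~2.1, Remark~2.4) and, for codimension $p+1$, a BGQ spectral sequence argument (Proposition~2.3). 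None of this is visible from the split Poincar\'e polynomial---note in particular that $Ch^2(\mathcal{R}_p(G))=\mathbb{Z}/p$ is pure torsion dying over any splitting field.

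Your plan skips this reduction and attempts to analyze $Ch^i(Y\otimes R)$ directly. The step ``a Rost-nilpotence argument will then descend this contribution from $E$ to $F$ and identify it with $\mathrm{pr}_Y^*(Ch^i(Y))$'' is where it breaks. Rost nilpotence lifts idempotents and nilpotents of motivic endomorphisms; it does not let you descend an element of $Ch^i(Y_E)$ to $Ch^i(Y)$. Indeed, with $E=F(X)$, the claim that the $c_j=0$ contribution over $E$ comes from $Ch^i(Y)$ is precisely the surjectivity statement you are trying to prove. Moreover your heuristic that the non-split $R$ contributes only in codimensions $0,p+1,2(p+1),\dots$ is false over $F$: the nonzero class in $Ch^2(R)$ shows $Ch^i(Y\otimes R)$ need not equal $Ch^i(Y)$ even for $i<p+1$. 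The paper's filtration computations are exactly what compensates for this, and they are absent from your outline.

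For the codimension $p+1$ part, the paper again proceeds via the Karpenko--Merkurjev reduction together with $Ch^{p+1}(\mathcal{R}_p(G))=0$, the latter requiring the injectivity of $Ch^{p+1}(X)\to\tau^{p+1/p+2}(X)$ proved by a BGQ differential analysis exploiting trivial Tits algebras. Your proposed ``Rost-type duality'' linking the top generator of $R$ to a degree-one zero-cycle is not how the hypothesis $1\notin\deg Ch_0(X_{F(\zeta)})$ enters; in the paper it simply guarantees $J_p(G_{F(\zeta)})=(1)$ so that the vanishing $Ch^{p+1}(\mathcal{R}_p(G_{F(\zeta)}))=0$ applies at each generic point.
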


The proof is given in section 3. 

In previous papers (\cite{ARC1}, \cite{ARC2}, after the so-called Main Tool Lemma by A.\,Vishik, cf \cite{GPQ}, \cite{RIC}), similar issues about rationality of cycles,
with quadrics instead of exceptional projective homogeneous varieties, have been treated.
The above statement is to put in relation with \cite[Theorem 4.3]{OSNV}, where \emph{generic splitting
varieties} have been considered. Also, Theorem 1.1 is contained in  \cite[Theorem 4.3]{OSNV}
if $\text{char}(F)=0$.

On the one hand, our method of proof is basically the method used to prove 
\cite[Theorem 4.3]{OSNV}. On the other hand, our method mainly relies
on a motivic decomposition result for projective homogeneous varieties due to 
V.\,Petrov, N.\,Semenov and K.\,Zainoulline (cf \cite[Theorem 5.17]{J-inv}).
It also relies on a linkage between the $\gamma$-filtration and Chow groups, 
in the spirit of \cite{Gamma}.
Our method works in any characteristic and is particularly suitable
for groups of type $F_4$ and $E_8$ mainly because the latter have an opportune $J$-invariant.

\medskip

In the aftermath of Theorem 1.1, we get the following statement dealing with
integral Chow groups (see \cite[Theorem 4.5]{OSNV}).

\begin{cor}
\textit{If $p\in \text{deg}\;CH_0(X)$ then for any equidimensional variety $Y$, the change of field homomorphism
\[CH(Y)\rightarrow CH(Y_{F(X)})\]
is surjective in codimension $<p+1$.
It is also surjective in codimension $p+1$ for a given $Y$ provided that 
$1\notin \text{deg}\;Ch_0(X_{F(\zeta)})$ for each generic point
$\zeta \in Y$.}
\end{cor}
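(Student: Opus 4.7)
The plan is to deduce Corollary~1.2 from Theorem~1.1 by combining the mod-$p$ surjectivity provided by the theorem with a transfer argument based on the hypothesis $p \in \text{deg}\;CH_0(X)$. Theorem~1.1 gives surjectivity of $Ch(Y) \to Ch(Y_{F(X)})$, and the transfer via a degree-$p$ zero-cycle on $X$ is exactly what is needed to promote this to integral surjectivity.

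Given $\alpha \in CH^i(Y_{F(X)})$ in the claimed codimension range, Theorem~1.1 applied to its mod-$p$ class furnishes $\beta \in CH^i(Y)$ with $\alpha = \beta_{F(X)} + p\gamma$ for some $\gamma \in CH^i(Y_{F(X)})$. It thus suffices to show that $p\gamma$ lies in the image of the change-of-field map for every such $\gamma$. I would organize this via the snake lemma applied to the pair of short exact sequences $0 \to CH \xrightarrow{\times p} CH \to Ch \to 0$ for $Y$ and $Y_{F(X)}$: Theorem~1.1 shows that the cokernel of $CH(Y) \to CH(Y_{F(X)})$ is $p$-divisible in the relevant codimensions, so it remains to check that the same cokernel is $p$-torsion, whereupon it must vanish (a $p$-divisible $p$-torsion abelian group is zero).

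To show the cokernel is $p$-torsion I would use a standard transfer construction: fix $z \in CH_0(X)$ of degree $p$, lift any $\gamma \in CH^i(Y_{F(X)})$ to $\tilde\gamma \in CH^i(Y \times X)$ via the surjection coming from the localization exact sequence, and form
\[
\delta \;:=\; (\mathrm{pr}_1)_*\bigl(\tilde\gamma \cdot \mathrm{pr}_2^*(z)\bigr) \in CH^i(Y).
\]
Flat base change along $F \hookrightarrow F(X)$ identifies $\delta_{F(X)}$ with the correspondence action of $\tilde\gamma_{F(X)}$ on $z_{F(X)} \in CH_0(X_{F(X)})$. Using that $X_{F(X)}$ has the canonical rational point $\eta$ (the generic point of $X$) and that $z_{F(X)} = p[\eta]$ in $CH_0(X_{F(X)})$ for our projective homogeneous $X$, the correspondence action returns $\delta_{F(X)} = p\gamma$ since $\tilde\gamma_{F(X)}$ evaluated at $[\eta]$ recovers $\gamma$ by construction.

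The main obstacle is justifying the identification $z_{F(X)} = p[\eta]$ in $CH_0(X_{F(X)})$, which amounts to a geometric statement about the projective homogeneous $G$-variety $X$: after base change to its own function field, the degree map on zero-cycles becomes an isomorphism onto $\mathbb{Z}$. This is a standard splitting/cellularity property for the varieties considered in the paper, but it is the fact one must pin down (or alternatively replace by an induction on codimension using Theorem~1.1 on correction terms of lower codimension) to make the whole argument go through.
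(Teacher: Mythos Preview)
The paper does not give a direct argument for Corollary~1.2; it simply derives it from Theorem~1.1 by invoking \cite[Theorem~4.5]{OSNV}. Your proposal is precisely the standard proof of that result: Theorem~1.1 forces the cokernel of $CH^i(Y)\to CH^i(Y_{F(X)})$ to be $p$-divisible, and the transfer via a zero-cycle of degree $p$ forces it to be $p$-torsion, hence zero. So your approach coincides with the one the paper appeals to.

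Two small remarks. First, the sequences $0\to CH\xrightarrow{\,\times p\,}CH\to Ch\to 0$ are only right exact in general (Chow groups can have $p$-torsion); this does not affect your conclusion, since right exactness is all you need to deduce $p$-divisibility of the cokernel, but the phrasing ``short exact'' should be adjusted. Second, the ``main obstacle'' you isolate, namely $z_{F(X)}=p[\eta]$ in $CH_0(X_{F(X)})$, is exactly the integral $A$-triviality used in \cite{OSNV}: for a projective homogeneous variety $X$ with $X(L)\neq\emptyset$ one has $X_L\cong G_L/P$, and the big cell of the Bruhat decomposition gives an open subscheme isomorphic to an affine space, so $X_L$ is rational and $\deg:CH_0(X_L)\to\mathbb{Z}$ is an isomorphism. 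With that fact recorded, your transfer computation $\delta_{F(X)}=p\gamma$ goes through verbatim and the proof is complete; the alternative ``induction on lower-codimension correction terms'' you allude to is not needed.
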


\begin{rem}
Our method of proof for Theorem 1.1 works for groups of type $G_2$ as well (with p=2).
However, the case of $G_2$ can be treated in a more elementary way if $\text{char}(F)=0$.

Indeed, it is known that to each group $G$ of type $G_2$ one can associate
a $3$-fold Pfister quadratic form $\rho$ such that, by denoting $X_{\rho}$ the Pfister quadric associated with $\rho$, the variety $X$ has a rational point over $F(X_{\rho})$ and vice-versa. Thus, for any equidimensional variety $Y$, one has the commutative diagram
\[\xymatrix{
 Ch(Y)   \ar[r] \ar[d] &  Ch(Y_{F(X)}) \ar[d]\\
   Ch(Y_{F(X_{\rho})})\ar[r]      &  Ch(Y_{F(X_{\rho}\times X)}) 
    }\]
where the right and the bottom maps are isomorphisms. 
Furthermore, as suggested in \cite[Remark on Page 665]{RIC} (where the assumption $\text{char}(F)=0$ is required), the change of 
field homomorphism $Ch(Y)\rightarrow Ch(Y_{F(Q)})$
is surjective in codimension $<3$. 
\end{rem}

\smallskip
\noindent
{\sc Acknowledgements.}
I gracefully thank Nikita Karpenko for sharing his great knowledge
and his valuable advice.

\section{Filtrations on projective homogeneous varieties}
In this section, we prove two propositions which play a crucial role in the proof of Theorem 1.1.

\medskip

First of all, we recall that for any smooth projective variety $X$ over a field $E$, one can consider two particular filtrations on the Grothendieck ring $K(X)$ (see \cite[\S 1.A]{Gamma}), i.e the
$\gamma$-filtration and the topological filtration, whose respective terms of 
codimension $i$ are given by
\[\gamma^i(X)=\langle c_{n_1}(a_1)\cdots  c_{n_m}(a_m)\,|\, n_1+\cdots+n_m\geq i \:\text{and}\:a_1, \dots ,a_m \in K(X)\rangle\]
 \noindent and
\[\tau^i(X)=\langle [\mathcal{O}_Z]\,|\,Z\hookrightarrow X \: \text{and}\: \text{codim}(Z)\geq i \rangle,\]
\noindent where $c_n$ is the $n$-th Chern Class with values in $K(X)$ and $[\mathcal{O}_Z]$
is the class of the structure sheaf of a closed subvariety $Z$.
We write $\gamma^{i/i+1}(X)$ and $\tau^{i/i+1}(X)$ for the respective quotients.
For any $i$, one has $\gamma^i(X)\subset \tau^i(X)$ and one even has 
$\gamma^i(X)=\tau^i(X)$ for $i\leq 2$.
We denote by $pr$ the canonical surjection
\[\begin{array}{rcl}
CH^i(X) & \twoheadlongrightarrow & \tau^{i/i+1}(X) \\
\left[Z\right] & \longmapsto & [\mathcal{O}_Z]
\end{array},\]
where $CH$ stands for the integral Chow group. 

The method of proof of the following proposition is largely inspired by the proof of \cite[Theorem 6.4 (2)]{CPD}.

\begin{prop}
\textit{Let $G_0$ be a split semisimple linear algebraic group over a field $F$ and let $B$ be a Borel subgroup of $G_0$.
There exist an extension $E/F$ and a cocycle $\xi \in H^1(E,G_0)$ such that the topological filtration
and the $\gamma$-filtration coincide on $K({}_{\xi}(G_0/B))$.}
\end{prop}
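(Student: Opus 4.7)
The plan is to take $(E,\xi)$ corresponding to a versal (generic) $G_0$-torsor and to transfer the equality $\gamma^i=\tau^i$ from the split flag variety through the restriction morphism to a splitting field. Concretely, let $V$ be a generically free linear representation of $G_0$ over $F$, let $U\subset V$ be the open locus on which $G_0$ acts freely, set $E=F(U/G_0)$, and let $\xi\in H^1(E,G_0)$ be the class of the generic fiber of $U\to U/G_0$. Any algebraic closure $\bar E$ of $E$ splits $\xi$, so ${}_{\xi}(G_0/B)_{\bar E}\cong (G_0/B)_{\bar E}$.

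On the split flag variety $(G_0/B)_{\bar E}$, the Grothendieck ring is generated as a ring by classes of line bundles $[\mathcal{L}_{\chi}]$ associated to characters $\chi$ of $B$ (Pittie--Steinberg). Every Schubert class is therefore expressible as a polynomial in first Chern classes of these line bundles, so $\gamma^i=\tau^i$ on $K((G_0/B)_{\bar E})$ for every $i$. The restriction morphism $\mathrm{res}\colon K({}_{\xi}(G_0/B))\to K((G_0/B)_{\bar E})$ is a ring homomorphism sending $\gamma^i$ into $\gamma^i$ (Chern classes commute with base change) and $\tau^i$ into $\tau^i$ (flat pullback preserves codimension and sends structure sheaves to structure sheaves). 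It therefore induces a map $\tau^i({}_{\xi}(G_0/B))/\gamma^i({}_{\xi}(G_0/B))\to \tau^i((G_0/B)_{\bar E})/\gamma^i((G_0/B)_{\bar E})=0$; to conclude $\tau^i=\gamma^i$ on $K({}_{\xi}(G_0/B))$ it is enough that this induced map be injective.

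The main obstacle is establishing this injectivity. The argument exploits the genericity of $\xi$: any class in $\tau^i({}_{\xi}(G_0/B))$ whose restriction to the split side is a polynomial in the $[\mathcal{L}_{\chi}]$'s must descend, via the universality of $(E,\xi)$, to a polynomial expression in line bundles already defined on ${}_{\xi}(G_0/B)$ over $E$, hence to an element of $\gamma^i({}_{\xi}(G_0/B))$. This specialization/descent step is the technical heart of the proof, and is the portion most directly modeled on \cite[Theorem 6.4(2)]{CPD}; it is also the reason the statement is formulated as an existence result (for a versal $(E,\xi)$) rather than as a property of every torsor.
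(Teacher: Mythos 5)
Your strategy has a genuine gap, and in fact its starting point is false. You claim that $\gamma^i=\tau^i$ on $K((G_0/B)_{\bar E})$ for the \emph{split} flag variety because, by Pittie--Steinberg, $K(G_0/B)$ is generated by classes of line bundles $[\mathcal{L}_\chi]$. But generation of the Grothendieck ring by line bundles does not imply that Schubert classes in the \emph{Chow ring} are polynomials in first Chern classes: that would mean the characteristic map $\mathbb{S}(T^{\ast})\to CH(G_0/B)$ is surjective, which fails exactly for non-special groups (its cokernel is controlled by the torsion index), and $F_4$ and $E_8$ are non-special. Since every element of $K(G_0/B)$ is a polynomial in the $[\mathcal{L}_\chi]$, the subring of $CH(G_0/B)$ generated by all Chern classes is just the image of the characteristic map, a proper subring; correspondingly the quotients $\gamma^{i/i+1}(G_0/B)$ are known to contain torsion for such groups, whereas $\tau^{i/i+1}(G_0/B)\simeq CH^i(G_0/B)$ is free because the split flag variety is cellular. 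Any such torsion lies in the kernel of $\gamma^{i/i+1}\to\tau^{i/i+1}$, which forces $\gamma^{i+1}\neq\tau^{i+1}$ already on the split form. On top of this, the injectivity of $\tau^i/\gamma^i\to\tau^i/\gamma^i$ under restriction --- which you yourself flag as ``the technical heart'' --- is left unproved, and the descent you sketch cannot work as stated: for a versal torsor the line bundles $\mathcal{L}_\chi$ on the split side do not in general descend to ${}_{\xi}(G_0/B)$ (this failure is exactly what $\mathrm{Pic}$ of the twisted form and the Tits algebras measure), so a class whose restriction is a polynomial in the $[\mathcal{L}_\chi]$ need not be one over $E$.

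The paper's proof never compares with the split form. It embeds $G_0$ into the special group $S=\mathbf{GL}_n$, sets $E=F(S/G_0)$, and takes for $\xi$ the generic fiber $\mathbf{T}$ of $S\to S/G_0$, so that $X=\mathbf{T}/B_E\simeq{}_{\xi}(G_0/B)$ admits a localization morphism $h\colon X\to S/B$; hence $h^{\ast}\colon CH(S/B)\to CH(X)$ is surjective. Because $S$ is special, $CH(S/B)$ \emph{is} generated by Chern classes (\cite[Proposition 6.2]{CPD}), and since Chern classes commute with pull-back, so is $CH(X)$. A descending induction on codimension then finishes: if $\gamma^{i+1}(X)=\tau^{i+1}(X)$, the generation of $CH(X)$ by Chern classes forces $\gamma^{i/i+1}(X)=\tau^{i/i+1}(X)$ by \cite[Lemma 2.16]{cod2}, whence $\gamma^{i}(X)=\tau^{i}(X)$. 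So the genericity of the torsor is used only to realize $X$ as a localization of a flag variety of a \emph{special} group, not to transfer the equality of filtrations from $\bar E$; to repair your argument you would need to replace the comparison with $(G_0/B)_{\bar E}$ by a comparison with $S/B$ in this way.
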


\begin{proof}
Let $n$ be an integer such that $G_0\subset \textbf{GL}_n$ and let us set $S:=\textbf{GL}_n$ and $E:=F(S/G_0)$.
We denote by $\textbf{T}$ the $E$-variety $S\times_{S/G_0}\text{Spec}(E)$ given by the generic fiber of the projection
$S\rightarrow S/G_0$. Note that since $\textbf{T}$ is clearly a $G_0$-torsor over $E$, there exists a cocycle 
$\xi \in H^1(E,G_0)$ such that the smooth projective variety $X:=\textbf{T}/B_E$ is isomorphic to ${}_{\xi}(G_0/B)$.
We claim that the Chow ring $CH(X)$ is generated by Chern classes.

Indeed, the morphism $h:X\rightarrow S/B$ induced by the canonical $G_0$-equivariant morphism $\textbf{T}\rightarrow S$
being a localisation, the associated pull-back
\[h^{\ast}:CH(S/B)\longrightarrow CH(X)\]
is surjective. Furthermore, the ring $CH(S/B)$ itself is generated by Chern classes: by \cite[\S 6,7]{CPD} there exist a morphism
\begin{equation}\mathbb{S}(T^{\ast})\longrightarrow CH(S/B),\end{equation}
(where $\mathbb{S}(T^{\ast})$ is the symmetric algebra of the group of characters $T^{\ast}$
of a split maximal torus $T\subset B$) with its image generated by Chern classes. 
Moreover, the morphism (2.2) is surjective by \cite[Proposition 6.2]{CPD}.
Since $h^{\ast}$ is surjective and Chern classes commute with pull-backs, the claim is proved.

We show now that the two filtrations coincide on $K(X)$ by induction on dimension.
Let $i\geq 0$ and assume that $\tau^{i+1}(X)=\gamma^{i+1}(X)$. Since for any $j\geq 0$, one has 
$\gamma^{j}(X)\subset\tau^{j}(X)$, the induction hypothesis implies that 
\[\gamma^{i/i+1}(X)\subset \tau^{i/i+1}(X).\]
Thus, the ring $CH(X)$ being generated by Chern classes, one has $\gamma^{i/i+1}(X)=\tau^{i/i+1}(X)$
by \cite[Lemma 2.16]{cod2}. Therefore one has $\tau^i(X)=\gamma^i(X)$ and the proposition is proved. 
\end{proof}

Note that this result remains true when one consider a \textit{special} parabolique subgroup $P$ instead of $B$.

\medskip

\medskip

\medskip

Now, we prove a result which will be used in section 3 to get the second conclusion of Theorem 1.1.

We recall that for any smooth projective variety $X$ over a field and for any $i<p+1$,
the canonical surjection $pr:Ch^i(X)\twoheadrightarrow \tau^{i/i+1}(X)$ with $\mathbb{Z}/p\mathbb{Z}$-coefficient
is an isomorphism (cf \cite[\S 1.A]{Gamma} for example). The following proposition extends this fact to $i=p+1$ 
provided that $X$ is a projective homogeneous variety
under a linear algebraic group $G$ of type $F_4$ or $E_8$.

\begin{prop}
\textit{Let $X$ be a projective homogeneous variety
under a group $G$ of type $F_4$ or $E_8$, then the canonical surjection 
\[pr:Ch^{p+1}(X)\twoheadrightarrow \tau^{p+1/p+2}(X)\]
is injective.}
\end{prop}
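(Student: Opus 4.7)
My plan is to invoke the Petrov-Semenov-Zainoulline motivic decomposition (\cite[Theorem 5.17]{J-inv}) together with Proposition 2.1 to transfer the desired injectivity from a generic projective homogeneous variety to $X$.

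By the PSZ theorem, with $\mathbb{Z}/p\mathbb{Z}$-coefficients, the motive of any projective homogeneous $G$-variety decomposes as a direct sum of shifts of an indecomposable motive $R = R(G)$ whose isomorphism class is determined by the $J$-invariant of $G$ at $p$. I would apply this to both $X$ and the variety $\tilde X := {}_\xi(G_0/B)$ from Proposition 2.1 (over the extension $E/F$), obtaining $M(X) = \bigoplus_j R\{a_j\}$ and $M(\tilde X) = \bigoplus_k R\{b_k\}$, where in both cases the underlying summand $R$ is the same: for groups of type $F_4$ at $p=3$ and $E_8$ at $p=5$, the $J$-invariant is sufficiently rigid to force this coincidence. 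Consequently, both $Ch^{p+1}(X)$ and $Ch^{p+1}(\tilde X)$ are direct sums of shifted pieces of $Ch(R)$, and, provided the topological filtration quotient decomposes compatibly, injectivity of $pr$ at codimension $p+1$ is a property of $R$; so it suffices to prove the injectivity for $\tilde X$.

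On $\tilde X$, Proposition 2.1 supplies $\gamma = \tau$ on $K(\tilde X)$, and its proof shows that $CH(\tilde X)$ is generated by Chern classes. From $\gamma = \tau$ we obtain $\gamma^{p+1/p+2}(\tilde X) = \tau^{p+1/p+2}(\tilde X)$, so the surjection $pr$ coincides with the natural map $Ch^{p+1}(\tilde X) \to \gamma^{p+1/p+2}(\tilde X)$. The latter is surjective by the Chern-class generation of $CH$, and injective because the $p$-torsion obstruction coming from the factor $p! = p\cdot(p-1)!$ appearing in Grothendieck--Riemann--Roch is annihilated precisely when every cycle class lifts to a Chern-class polynomial. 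Hence $pr$ is injective on $\tilde X$, and transferring back via the motivic decomposition yields injectivity on $X$.

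The main obstacle will be the rigorous identification of the summand $R$ across the two motivic decompositions and the verification that the injectivity of $pr$ is a motivic property of $R$. Since the topological filtration is not a priori a motivic invariant, this step requires careful use of the fact that in the relevant setting both $\gamma$ and $\tau$ are motivic (thanks to Proposition 2.1 and the universality of Chern classes), so that the filtration quotients do decompose compatibly with the PSZ decomposition. This is where the special structure of $F_4$ and $E_8$ at their torsion primes is decisive.
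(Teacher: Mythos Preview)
Your approach has a fundamental gap at the key step, the injectivity of $pr$ on $\tilde X$. The coincidence $\gamma=\tau$ on $K(\tilde X)$ identifies $\gamma^{p+1/p+2}(\tilde X)$ with $\tau^{p+1/p+2}(\tilde X)$, but it says nothing about the kernel of $pr:Ch^{p+1}(\tilde X)\to\tau^{p+1/p+2}(\tilde X)$. That kernel is governed by the Brown--Gersten--Quillen spectral sequence: $pr$ is the edge map $E_2^{p+1,-p-1}\twoheadrightarrow E_\infty^{p+1,-p-1}$, and with $\mathbb{Z}/p\mathbb{Z}$-coefficients the only possibly nonzero incoming differential is $d_p:E_p^{1,-2}\to E_p^{p+1,-p-1}$. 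Your Riemann--Roch sentence does not address this: the Chern character furnishes a map $\tau^{i/i+1}\to Ch^i$ whose composite with $pr$ is multiplication by $\pm(i-1)!$, and this vanishes modulo $p$ exactly when $i=p+1$, so GRR gives no information at the one codimension you need. Chern-class generation of $CH(\tilde X)$ likewise yields only surjectivity of $pr$, which is already known. There are further structural problems---$\tilde X$ lives over an extension $E\neq F$, so the indecomposable summands for $G$ and for the twisted group over $E$ are motives over different bases, and the topological filtration is in any case not a motivic invariant---but these are secondary to the fact that you have not established injectivity of $pr$ on any variety.

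The paper's argument is of an entirely different nature: it works directly on $X$, identifies $pr$ with the BGQ edge map, and shows the differential $d_p$ vanishes by proving $E_\infty^{1,-2}(X)\simeq E_2^{1,-2}(X)$. This last isomorphism is obtained via higher $K$-theory: one has $E_\infty^{1,-2}=K_1^{(1/2)}(X)$ and $E_2^{1,-2}=H^1(X,K_2)$, and both are identified with $K_1F\otimes Ch^1(X)$ using Merkurjev's theorem (applicable because groups of type $F_4$ and $E_8$ have trivial Tits algebras), cellularity of $X_{\text{sep}}$, and rationality of the Picard group. None of this machinery, in particular the input from $K_1$ and $K_2$, appears in your proposal.
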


\begin{proof}

The epimorphism $pr:Ch^{p+1}(X)\twoheadrightarrow \tau^{p+1/p+2}(X)$ coincides with the edge homomorphism of the spectral Brown-Gersten-Quillen
structure $E^{p+1,-p-1}_2(X)\Rightarrow K(X)$,
i.e $E^{p+1,-p-1}_r(X)$ stabilizes for $r>>0$ with 
$E^{p+1,-p-1}_{\infty}(X)=\tau^{p+1/p+2}(X)$,
 and for any $r\geq 2$ 
the differential $E^{p+1,-p-1}_r(X)\rightarrow E^{p+1+r,-p-r}_r(X)$
is zero, so that the epimorphism $pr$ coincides with the composition
\[Ch^{p+1}(X)\simeq E^{p+1,-p-1}_2(X)\twoheadrightarrow 
E^{p+1,-p-1}_3(X)\twoheadrightarrow \cdots \twoheadrightarrow
E^{p+1,-p-1}_{\infty}(X)=\tau^{p+1/p+2}(X).\]

\medskip

Now, it is equivalent in order to prove the proposition to prove that 
for any $r\geq 2$, the differential 
$E^{p+1-r,-p-2+r}_r(X)\rightarrow E^{p+1,-p-1}_r(X)$
is zero.

\medskip

First of all, since we work with $\mathbb{Z}/p\mathbb{Z}$-coefficient, by \cite[Theorem 3.6]{Adams}, 
the differential 
$E^{p+1-r,-p-2+r}_r(X)\rightarrow E^{p+1,-p-1}_r(X)$
is zero for any $r\geq 2$ with $r\neq p$. Hence, one only has to show that the 
differential $E^{1,-2}_p(X)\rightarrow E^{p+1,-p-1}_p(X)$ is zero.

\medskip

Let us consider the following composition given by the BGQ-structure
\[E^{1,-2}_{\infty}(X)\hookrightarrow \cdots \hookrightarrow 
E^{1,-2}_3(X)\hookrightarrow E^{1,-2}_2(X).\]
Note that one has $E^{1,-2}_{\infty}(X)\simeq E^{1,-2}_2(X)$
if and only if for any $r\geq 2$ the differential 
$E^{1,-2}_r(X)\rightarrow E^{1+r,-2-r+1}_r(X)$ is zero.
Therefore it is sufficient to prove that $E^{1,-2}_{\infty}(X)\simeq E^{1,-2}_2(X)$ to get that the 
differential $E^{1,-2}_p(X)\rightarrow E^{p+1,-p-1}_p(X)$ is zero.

\medskip
 
On the one hand, by the very defintion, the group $E^{1,-2}_{\infty}(X)$
is the first quotient $K^{(1/2)}_1(X)$ of the topological filtration on $K_1(X)$.
On the other hand, one has $E^{1,-2}_2(X)\simeq H^1(X,K_{2})$ 
(for any integers $p$ and $q$, one has $E^{p,q}_2(X)\simeq H^p(X,K_{-q})$).

Let us now consider the commutative diagram (cf \cite[\S 4]{Algeo})
\[\xymatrix{
      K^{(1/2)}_1(X) \ar@{^{(}->}[rr] && H^1(X,K_{2}) \\
       & H^0(X,K_{1})\otimes Ch^1(X) \ar[lu] \ar[ru]
    }\]
We claim that the natural map $ H^0(X,K_{1})\otimes Ch^1(X)\rightarrow H^1(X,K_{2})$ is an isomorphism.
Indeed
since $G$ is of type $F_4$ or $E_8$, it has only trivial Tits algebras, and
therefore, by \cite[Theorem]{Merk2}, one has
\[H^1(X,K_{2})\simeq H^1(X_{\text{sep}},K_{2})^{\Gamma},\]
where $\Gamma$ is the absolute Galois group of $F$. 
Moreover, since the variety $X_{\text{sep}}$ is cellular, by \cite[Proposition 1]{Merk2},
one has 
\[H^1(X_{\text{sep}},K_{2})\simeq
K_1F_{\text{sep}}\otimes Ch^1(X_{\text{sep}}).\]
Thus, since the Picard group of any homogeneous projective variety under a group of 
type $F_4$ or $E_8$ is rational (cf \cite[Example 4.1.1]{Gen}) and since $(K_1F_{\text{sep}})^{\Gamma}=K_1F=H^0(X,K_{1})$, one has
\[H^1(X,K_{2})\simeq K_1F\otimes Ch^1(X)\simeq H^0(X,K_{1})\otimes Ch^1(X),\]
and the claim is proved. Therefore, one has $E^{1,-2}_{\infty}(X)\simeq E^{1,-2}_2(X)$ and the proposition is proved.
\end{proof}

\begin{rem}
Assume that $G_0$ of \emph{strongly inner} type (e.g $F_4$ and $E_8$)
and consider an extension $E/F$ and a cocycle $\xi \in H^1(E, G_0)$.
By \cite[Theorem 2.2.(2)]{Pan1}, the change of field homomorphism
\[K({}_{\xi}(G_0/B)_E)\rightarrow K({}_{\xi}(G_0/B)_{\overline{E}})\simeq 
K(G_0/B)\]
is an isomorphism, where $\overline{E}$ denotes an algebraic closure of $E$.
Therefore, since the $\gamma$-filtration is defined in terms of Chern classes and the latter commute with pull-backs, the quotients of the $\gamma$-filtration on $K({}_{\xi}(G_0/B)_E)$
do not depend nor on the extension $E/F$ neither on the choice of $\xi \in H^1(E, G_0)$.
\end{rem}

\section{Proof of Theorem 1.1}

In this section, we prove Theorem 1.1. 

\medskip

First of all, note that the $F$-variety $X$ is $A$-trivial in the sense of \cite[Definition 2.3]{OSNV} 
(see \cite[Example 2.5]{OSNV}),
i.e for any extension $L/F$ with $X(L)\neq \emptyset$, the degree homomorphism $\text{deg}: Ch_0(X_L)\rightarrow \mathbb{Z}/p\mathbb{Z}$ 
is an isomorphism. Therefore, by \cite[Lemma 2.9]{OSNV}, the change of field homomorphism $Ch(Y)\rightarrow Ch(Y_{F(X)})$ is an isomorphism (in any codimension) 
if $1\in \text{deg}\,Ch_0(X)$.
Hence, one can assume that $1\notin \text{deg}\,Ch_0(X)$.

Now, we know from \cite[Table 4.13]{J-inv} that the $J$-invariant $J_p(G)$ of $G$ is equal to $(1)$ or $(0)$.
However, the assumption $J_p(G)=(0)$ implies that there exists a splitting field $K/F$ of degree coprime to $p$ 
(see \cite[Corollary 6.7]{J-inv}),
and in that case one has $Ch_0(X)\simeq Ch_0(X_K)$ and $1\in \text{deg}\,Ch_0(X_K)$ by $A$-triviality of $X$.
Thus, under the assumption $1\notin \text{deg}\,Ch_0(X)$, one necessarly has $J_p(G)=(1)$ and that is why we can
assume $J_p(G)=(1)$ in the sequel. 

\medskip

Since $X$ is $A$-trivial, one can use the following proposition (cf \cite[Proposition 2.8]{OSNV}).

\begin{prop}[Karpenko, Merkurjev]
\textit{Given an equidimensional $F$-variety Y and an integer $m$ such that for any $i$ and any point
$y\in Y$ of codimension $i$ the change of field homomorphism}
\[Ch^{m-i}(X)\rightarrow Ch^{m-i}(X_{F(y)})\]
\textit{is surjective, the change of field homomorphism}
\[Ch^{m}(Y)\rightarrow Ch^{m}(Y_{F(X)})\]
\textit{is also surjective.}
\end{prop}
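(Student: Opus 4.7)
The plan is to lift $\alpha\in Ch^m(Y_{F(X)})$ to a class
$\tilde\alpha\in Ch^m(Y\times X)$ and then, by a Noetherian induction on the
codimension in $Y$ of the image of its components, successively modify
$\tilde\alpha$ until what remains equals $\text{pr}_1^*\beta$ for some
$\beta\in Ch^m(Y)$. The lifting step relies on the standard identification
$Ch^m(Y_{F(X)})=\varinjlim_U Ch^m(Y\times U)$, where $U$ ranges over open
subschemes of $X$ containing its generic point, together with the localization
exact sequence for $Y\times U\subset Y\times X$ to extend a partial lift from $Y\times U$ to $Y\times X$.

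Write $\tilde\alpha=\sum n_j[V_j]$ with $V_j\subset Y\times X$ irreducible, and
let $Y_j:=\overline{\text{pr}_1(V_j)}$ have codimension $c_j$ in $Y$ with
generic point $y_j$; observe that $0\le c_j\le m$. Restricting $[V_j]$ to the
fibre $X_{F(y_j)}$ gives a cycle of codimension $m-c_j$, which by the
surjectivity hypothesis applied at the point $y_j$ coincides with
$\sigma_j|_{X_{F(y_j)}}$ for some $\sigma_j\in Ch^{m-c_j}(X)$. The localization
exact sequence on $Y_j\times X$ then shows that the difference
\[ [V_j]-\text{pr}_1^*[Y_j]\cdot\text{pr}_2^*\sigma_j \]
is supported over a proper closed subvariety of $Y_j$, hence over a subvariety
of $Y$ of codimension strictly greater than $c_j$.

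Apply this modification to all components with $c_j=c$, successively for
$c=0,1,\ldots,m-1$. The key observation is that for $c<m$ the correction term
$\text{pr}_1^*[Y_j]\cdot\text{pr}_2^*\sigma_j$ restricts to zero in
$Ch^m(Y_{F(X)})$, because $\text{pr}_2^*\sigma_j$ factors through
$Ch^{m-c}(\text{Spec}\,F(X))=0$; so the image of $\tilde\alpha$ in
$Ch^m(Y_{F(X)})$ is preserved throughout. After all these iterations only
components with $c_j=m$ survive, and for such components $\sigma_j\in Ch^0(X)$
is a multiple of $[X]$, so $\text{pr}_1^*[Y_j]\cdot\text{pr}_2^*\sigma_j$ is
automatically of the form $\text{pr}_1^*\beta_j$ with $\beta_j\in Ch^m(Y)$; a
final subtraction leaves a class supported over codimension $>m$ in $Y$, which
must vanish by a dimension count, yielding $\alpha=\beta_{F(X)}$ for
$\beta:=\sum_j n_j\beta_j$. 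The delicate point is the asymmetry between $c<m$
and $c=m$: corrections for $c<m$ are invisible in $Ch^m(Y_{F(X)})$ and serve
merely to simplify $\tilde\alpha$, while those at $c=m$ produce the desired lift
$\beta$; handling possibly non-integral $Y_j\times X$ (where multiplicities
enter into $\sigma_j$) is the main bookkeeping issue, and is absorbed by the
freedom granted by the surjectivity hypothesis.
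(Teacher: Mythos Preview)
The paper does not give its own proof of this proposition; it is simply quoted from Karpenko--Merkurjev with a reference to \cite[Proposition~2.8]{OSNV}. Your argument is exactly the standard one found there: lift to $Ch^m(Y\times X)$ via the identification $Ch^m(Y_{F(X)})=\varinjlim_U Ch^m(Y\times U)$ and localization, then run a d\'evissage on the $Y$-codimension of the projections of the components, using the hypothesis at each generic point to trade a component for one supported in strictly higher codimension over $Y$, until only components with $c_j=m$ remain and the dimension count finishes the argument.

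One notational point worth flagging: since $Y$ is only assumed equidimensional, $Y\times X$ need not be smooth and there is no intersection product in general, so the expression $\mathrm{pr}_1^\ast[Y_j]\cdot\mathrm{pr}_2^\ast\sigma_j$ should be read as the pushforward to $Y\times X$ of the flat pullback of $\sigma_j$ along $Y_j\times X\to X$ (equivalently, via the $Ch(X)$-module structure on $Ch(Y\times X)$, available because $X$ is smooth). With that reading the argument is correct. Your remark about ``possibly non-integral $Y_j\times X$'' is in fact unnecessary here: $X$ is a projective homogeneous variety, hence geometrically integral, so $Y_j\times X$ is integral whenever $Y_j$ is.
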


Consequently, it is sufficient in order to prove the first conclusion of
Theorem 1.1 to show that for any extension $L/F$,
the change of field homomorphism
\begin{equation} Ch(X)\longrightarrow Ch(X_L) \end{equation}
is surjective in codimension $<p+1$.

\medskip

Moreover, the $F$-variety being generically split (see \cite[Example 3.6]{J-inv}), one can apply the
motivic decomposition result \cite[Theorem 5.17]{J-inv} to $X$ and get that the motive $\mathcal{M}(X,\mathbb{Z}/p\mathbb{Z})$
decomposes as a sum of twists of an indecomposable motive $\mathcal{R}_p(G)$ (in the same way as (3.5)). 
Note that the quantity and the value of those twists do not depend on the base field.
In particular, we get that for any extension $L/F$ and any integer $k$, the group $Ch^k(X_L)$ is isomorphic to a direct sum
of groups $Ch^{k-i}(\mathcal{R}_p(G)_L)$ with $0\leq i \leq k$.
Therefore, the surjectivity of (3.2) in codimension $<p+1$ is a consequence of the following proposition. 

\begin{prop}
\textit{For any extension $L/F$, the change of field}
\begin{equation} Ch(\mathcal{R}_p(G))\longrightarrow Ch(\mathcal{R}_p(G)_L) \end{equation}
\textit{is surjective in codimension $<p+1$.}
\end{prop}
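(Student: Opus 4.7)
The plan is to reduce Proposition 3.3, via the canonical isomorphism $Ch^i \simeq \tau^{i/i+1}$ valid in codimension $i < p+1$ with $\mathbb{Z}/p\mathbb{Z}$-coefficients (as recalled just before Proposition 2.4), to a vanishing statement about the topological filtration on the $K$-theory of $X$, which will then be controlled by the universality of the $\gamma$-filtration established in Section 2.

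First, I would dispose of the codimension $0$ case at once: $Ch^0(\mathcal{R}_p(G)_L) = \mathbb{Z}/p\mathbb{Z}$ with generator the identity, which is always $F$-rational. For $0 < i < p+1$, my aim would be to prove the sharper statement $Ch^i(\mathcal{R}_p(G)_L) = 0$, which makes surjectivity automatic. Using the isomorphism with $\tau^{i/i+1}$, this translates to the assertion that the $\mathcal{R}_p(G)$-component of the topological filtration quotient of $K(X_L)$ vanishes in the range $0 < i < p+1$.

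Next, I would combine Proposition 2.1 with Remark 2.5. Applying Proposition 2.1 in its variant for the special parabolic $P$ defining $X$ (as noted after its proof), one obtains an extension $E/F$ and a cocycle $\xi \in H^1(E, G_0)$ such that the topological and $\gamma$-filtrations agree on $K({}_{\xi}(G_0/P)_E)$. Remark 2.5 (via Panin's theorem, using that $F_4$ and $E_8$ are strongly inner) guarantees that the quotients of the $\gamma$-filtration are universal — independent of both the extension and the cocycle — so that the $\gamma$-filtration quotients on $K(X_L)$ coincide with those on $K({}_{\xi}(G_0/P)_E)$. The PSZ motivic decomposition (Theorem 5.17 of \cite{J-inv}) together with $J_p(G)=(1)$ then pins down the Poincar\'e polynomial of $\mathcal{R}_p(G)$, placing its non-trivial Tate pieces over a splitting field in codimensions $0$ and $\geq p+1$; hence the universal $\gamma^{i/i+1}$ on the $\mathcal{R}_p(G)$-component vanishes for $0 < i < p+1$.

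The hard part will be passing from the equality $\tau = \gamma$ on the specific generic form ${}_{\xi}(G_0/P)_E$ furnished by Proposition 2.1 to the corresponding vanishing on an arbitrary twist $X_L$, since a priori the topological filtration on $X_L$ can be strictly larger than on the generic form. This step will require exploiting the motivic structure carefully: the $\mathcal{R}_p(G)$-component of $\tau^{i/i+1}(X_L)$ must be bounded, through the PSZ projector and the universal identification from Remark 2.5, by its counterpart on the generic form — where the universal $\gamma$-filtration computed above vanishes — and this bounding is the technical heart of the argument.
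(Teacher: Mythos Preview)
Your central claim --- that $Ch^i(\mathcal{R}_p(G)_L)=0$ for every $0<i<p+1$ --- is false, and this breaks the argument at $i=2$. Lemma~3.7 of the paper computes $Ch^2(\mathcal{R}_p(G))=\mathbb{Z}/p\mathbb{Z}$ under $J_p(G)=(1)$; in particular, the group is nonzero, so surjectivity in codimension~$2$ cannot be obtained by vanishing. The error lies in the inference ``Tate pieces of $\overline{\mathcal{R}_p(G)}$ sit in codimensions $0,\,p+1,\,2(p+1),\dots$, hence the universal $\gamma^{i/i+1}$ on the $\mathcal{R}_p(G)$-component vanishes for $0<i<p+1$.'' The positions of the split Tate summands tell you the \emph{split} Chow groups, but the $\gamma$-filtration on $K(\mathfrak{B})$ --- which by Remark~2.4 is the same abstractly for all twists --- records the torsion class coming from the Rost invariant. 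Concretely, for the generic form $\mathfrak{B}'$ produced by Proposition~2.1 one has $J_p(G')=(1)$ (the paper proves this inside Lemma~3.9 by exactly the kind of counting you are attempting), and then $\gamma^{2/3}(\mathfrak{B}')=\tau^{2/3}(\mathfrak{B}')\simeq Ch^2(\mathfrak{B}')$ has an extra copy of $\mathbb{Z}/p\mathbb{Z}$ beyond the $a_2$ copies coming from $Ch^0(\mathcal{R}_p(G'))^{\oplus a_2}$. So the universal $\gamma^{2/3}$ on the $\mathcal{R}_p(G)$-component is $\mathbb{Z}/p\mathbb{Z}$, not zero.

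The paper's proof therefore treats codimension~$2$ differently from the others: it establishes $Ch^2(\mathcal{R}_p(G))=\mathbb{Z}/p\mathbb{Z}$ via \cite{Gamma}, and then, when $J_p(G_L)=(1)$, proves surjectivity by showing that $Ch^2(\mathfrak{B})\to Ch^2(\mathfrak{B}_L)$ is identified with the universal isomorphism $\gamma^{2/3}(\mathfrak{B})\to\gamma^{2/3}(\mathfrak{B}_L)$ (using $\gamma^3=\tau^3$ on both sides since both cocycles are generic). Your outline is essentially correct for codimensions $1,3,4,5$, and indeed matches the paper's strategy in Lemma~3.9, but you will need this separate argument for codimension~$2$; the ``hard part'' you flag in your last paragraph is not the real obstruction --- the vanishing you are trying to transport is already false at the source.
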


\begin{proof}
Let $G_0$ be a split linear algebraic group of the same type of the type of $G$ and let $\xi \in H^1(F,G_0)$ be a cocycle
such that $G$ is isogenic to the twisted form ${}_{\xi}G_0$. We write $\mathfrak{B}$ for the Borel variety of $G$
(i.e $\mathfrak{B}={}_{\xi}(G_0/B)$, where $B$ is a Borel subgroup of $G_0$).

\medskip

By \cite[Theorem 5.17]{J-inv}, one has the motivic decomposition
\begin{equation}\mathcal{M}(\mathfrak{B},\mathbb{Z}/p\mathbb{Z})\simeq \bigoplus_{i\geq 0}\mathcal{R}_p(G)(i)^{\oplus a_i},
\end{equation}
where $\Sigma_{i\geq 0}a_i t^i=P(CH(\overline{\mathfrak{B}}),t)/P(CH(\overline{\mathcal{R}_p(G)}),t)$,
with $P(-,t)$ the \textit{Poincar\'{e} polynomial}. Thus, for any integer $k$,we get the following decomposition
concerning Chow groups
\begin{equation}
Ch^k(\mathfrak{B}_L)\simeq \bigoplus_{i\geq 0} Ch^{k-i}(\mathcal{R}_p(G)_L)^{\oplus a_i}
\end{equation}

First of all, the homomorphism (3.4) is clearly surjective in codimension 0 since one has
$Ch^0(\mathcal{R}_p(G)_L)=\mathbb{Z}/p\mathbb{Z}$ for any extension $L/F$. 
Then, $Ch^1(\overline{\mathfrak{B}})$ is identified with the Picard group $\text{Pic}(\overline{\mathfrak{B}})$
and is rational (see \cite[Example 4.1.1]{Gen}).
Furthermore, thanks to the Solomon Theorem for example (see \cite[\S 2.5]{Gen}), one can compute the coefficients $a_i$'s:
we get $a_0=1$ and $a_1=\text{rank}(G)=\text{rank}(Ch^1(\overline{\mathfrak{B}}))$. Thus, the isomorphism (3.6) implies that
$Ch^1(\mathcal{R}_p(G)_L)=0$ for any extension $L/F$.

\medskip

\medskip

\medskip

We have already shown that the homomorphism (3.4) is surjective in codimension $0$ and $1$.
The following lemma implies the surjectivity in codimension $2$ and $3$ (and therefore proves the first conclusion of
Theorem 1.1 if $G$ is of type $F_4$).

\begin{lemme}
\textit{Under the assumption $J_p(G)=(1)$, one has}
\[Ch^2(\mathcal{R}_p(G))=\mathbb{Z}/p\mathbb{Z}\:\:\:\textit{and}\:\:\: Ch^3(\mathcal{R}_p(G))=0\]
\end{lemme}

\begin{proof}
Since $J_p(G)=(1)$, by \cite[Example 5.3]{Eq}, the cocycle $\xi \in H^1(F,G_0)$ match with a \textit{generic} $G_0$-torsor 
in the sense of \cite{Eq}. Thus, by \cite[Proposition 3.2]{Gamma} and \cite[pp. 31, 133]{coh}, 
one has $\text{Tors}_p CH^2(\mathfrak{B})\neq 0$ 
(note that since an algebraic group of type $F_4$ or $E_8$ is simply connected,
it is of strictly inner type, and we can use material from \cite[\S 3]{Gamma}). 
The conclusion is given by \cite[Proposition 5.4]{Gamma}
\end{proof}

\medskip

Let us fix an extension $L/F$. We now prove the surjectivity of (3.4) in codimension $2$ and $3$.
By \cite[Example 4.7]{J-inv}, one has $J_p(G_L)=(0)$ or $J_p(G_L)=(1)$.

\medskip

If $J_p(G_L)=(0)$ then one has $\mathcal{R}_p(G_L)=\mathbb{Z}/p\mathbb{Z}$ by \cite[Corollary 6.7]{J-inv},
and on the other hand the motivic decomposition given in \cite[Proposition 5.18 (i)]{J-inv} implies the following 
decomposition on Chow groups for any integer $k$
\begin{equation} Ch^k(\mathcal{R}_p(G)_L)\simeq \bigoplus_{i=0}^{p-1}Ch^{k-i(p+1)}(\mathcal{R}_p(G_L)).\end{equation}
In particular, one has $Ch^k(\mathcal{R}_p(G)_L)=0$ for $k=2$ or $3$ and the conclusion follows.

\medskip

If $J_p(G_L)=(1)$ then by Lemma 3.7 one has 
$Ch^2(\mathcal{R}_p(G_L))=\mathbb{Z}/p\mathbb{Z}$ and $Ch^3(\mathcal{R}_p(G_L))=0$.
Moreover, since $J_p(G_L)=J_p(G)$, one has $\mathcal{R}_p(G_L)\simeq \mathcal{R}_p(G)_L$ 
(see \cite[Proposition 5.18 (i)]{J-inv}). Therefore, the homomorphism (3.4) is clearly surjective in codimension $3$.

We claim that it is also surjective in codimension $2$.
By (3.6) it suffices to show that the change of field 
$Ch^2(\mathfrak{B})\rightarrow Ch^2(\mathfrak{B}_L)$ is an isomorphism.
We use material and notation introduced in section 2.
Since $J_p(G)=J_p(G_L)=(1)$, the cocycles $\xi$ and $\xi_L$ match with generic $G_0$-torsors
and one consequently has $\gamma^3(\mathfrak{B})=\tau^3(\mathfrak{B})$ and
$\gamma^3(\mathfrak{B}_L)=\tau^3(\mathfrak{B}_L)$ (see \cite[Theorem 3.1(ii)]{Gamma}). It follows that
\[\gamma^{2/3}(\mathfrak{B})=\tau^{2/3}(\mathfrak{B})\:\:\: \text{and}\:\:\:
\gamma^{2/3}(\mathfrak{B}_L)=\tau^{2/3}(\mathfrak{B}_L).\]
Therefore, since $2<p+1$, the homomorphsim 
$Ch^2(\mathfrak{B})\rightarrow Ch^2(\mathfrak{B}_L)$ coincides with
\[Ch^2(\mathfrak{B})\simeq \gamma^{2/3}(\mathfrak{B})\rightarrow \gamma^{2/3}(\mathfrak{B}_L)\simeq Ch^2(\mathfrak{B}_L)\]
and the center arrow is an isomorphism by Remark 2.4.

\medskip

\medskip

\medskip

The surjectivity of (3.4) in codimension $4$ and $5$ is a direct consequence of the following statement, where $G$ is of type $E_8$ and $p=5$.
Consequently, Lemma 3.9 completes the proof of the first conclusion of Theorem 1.1 for $G$ of type $E_8$.

\begin{lemme}
\textit{For any extension $L/F$, one has}
\[Ch^4(\mathcal{R}_5(G)_L)=0 \:\:\:\textit{and}\:\:\:Ch^5(\mathcal{R}_5(G)_L)=0\]
\end{lemme}

\begin{proof}
Since $J_5(G)=(1)$, we know that $J_5(G_L)=(1)$ or $(0)$. 
If $J_5(G_L)=(0)$ then one has $R_5(G_L)=\mathbb{Z}/5\mathbb{Z}$ and the isomorphism (3.8) implies that
$Ch^4(\mathcal{R}_5(G)_L)=Ch^5(\mathcal{R}_5(G)_L)=0$. Thus, one can assume $L=F$ and we have to prove that 
$Ch^4(\mathcal{R}_5(G))=Ch^5(\mathcal{R}_5(G))=0$.

\medskip

By Proposition 2.1 there exist an extension $E/F$ and a cocycle $\xi'\in H^1(E,G_0)$ such that the topological filtration and the 
$\gamma$-filtration coincide on $K(\mathfrak{B}')$, with $\mathfrak{B}'={}_{\xi'}(G_0/B)$.
Let us denote $G'$ the variety ${}_{\xi'}G_0$. 

We claim that $J_5(G')=(1)$.
Indeed, assume that $J_5(G')=(0)$. In that case, one has $R_5(G')=\mathbb{Z}/5\mathbb{Z}$ and the isomorphism (3.6)
gives that $Ch^2(\mathfrak{B}')={\mathbb{Z}/5\mathbb{Z}}^{\oplus a_2}$.
Since $2<p+1$, it implies that $\gamma^{2/3}(\mathfrak{B}')={\mathbb{Z}/5\mathbb{Z}}^{\oplus a_2}$,
and consecutively
$\gamma^{2/3}(\mathfrak{B})={\mathbb{Z}/5\mathbb{Z}}^{\oplus a_2}$ by Remark 2.4. 
However, we have $\gamma^{2/3}(\mathfrak{B})\simeq \tau^{2/3}(\mathfrak{B})$ 
(because $\gamma^{3}(\mathfrak{B})\simeq \tau^{3}(\mathfrak{B})$ since $\xi\in H^1(F,G_0)$ is generic). 
Thus, we have $Ch^2(\mathfrak{B})={\mathbb{Z}/5\mathbb{Z}}^{\oplus a_2}$ 
which contradicts $Ch^2(\mathcal{R}_5(G))=\mathbb{Z}/5\mathbb{Z}$ and the claim is proved
(we recall that for any $i<6=p+1$, one has $\tau^{i/i+1}(X)\simeq Ch^i(X)$).  

\medskip

We now compute the groups $\gamma^{i/i+1}(\mathfrak{B}')$ for $i=3,4,5$.
Note that since $K(\mathfrak{B}')\simeq K(G_0/B)$ 
and since the description of the free group $K(G_0/B)$ in terms of generators
does not depend on the characteristic $\text{char}(E)$ of $E$ 
( see \cite[Lemma 13.3(4)]{Inv}), we can assume that $\text{char}(E)=0$ in order to compute those groups.

In that case, since $J_5(G')\neq (0)$, the isomorphism (3.6) combined with 
the following theorem (adapted from
\cite[Theorem RM.10]{OSNV} to our situation) 

\begin{thm}[Karpenko, Merkurjev]
\textit{Let $H$ be a semisimple linear algebraic group of inner type
over a field of characteristic $0$ and let $p$ be a torsion prime of $H$. 
If $J_p(H)\neq (0)$ then}
\[Ch^j(\mathcal{R}_p(H))=\left\{\begin{array}{ll}
\mathbb{Z}/p\mathbb{Z} & if\;j=0\;\;or\;j=k(p+1)-p+1,\;1\leq k \leq p-1 \\
0 & otherwise
\end{array}\right.\]
\end{thm}

\noindent gives that
\[\gamma^{i/i+1}(\mathfrak{B}')\simeq Ch^i(\mathfrak{B}')={\mathbb{Z}/5\mathbb{Z}}^{\oplus (a_{i-2}+a_i)}\:\:\:\text{for}\:i=3,4,5\]
(where the first isomorphism is due to $i<p+1$). Therefore, we get 
\[\gamma^{i/i+1}(\mathfrak{B})={\mathbb{Z}/5\mathbb{Z}}^{\oplus (a_{i-2}+a_i)}\:\:\:\text{for}\:i=3,4,5\]
(with no particular assumption on $\text{char}(F)$).
Thus, since $\tau^{3/4}(\mathfrak{B})\simeq Ch^3(\mathfrak{B})$, the isomorphism (3.6) for $k=3$ gives that 
$\tau^{3/4}(\mathfrak{B})\simeq \gamma^{3/4}(\mathfrak{B})$.
Since the $\gamma$-filtration is contained in the topological one, we get
\[\tau^{4}(\mathfrak{B})= \gamma^{4}(\mathfrak{B}),\]
which implies the existence of an exact sequence
\[0\rightarrow (\tau_5(\mathfrak{B})/\gamma_5(\mathfrak{B})) \rightarrow 
\gamma^{4/5}(\mathfrak{B}) \rightarrow \tau^{4/5}(\mathfrak{B})\rightarrow 0.\]
Thus, since $\tau^{4/5}(\mathfrak{B})\simeq Ch^4(\mathfrak{B})$, by applying the isomorphism (3.6)
for $k=4$, we get a surjection
\[{\mathbb{Z}/5\mathbb{Z}}^{\oplus (a_2+a_4)}\twoheadrightarrow Ch^4(\mathcal{R}_5(G))\oplus {\mathbb{Z}/5\mathbb{Z}}^{\oplus (a_2+a_4)},\]
which implies that $Ch^4(\mathcal{R}_5(G))=0$.

We prove that $Ch^5(\mathcal{R}_5(G))=0$ by proceeding in exactly the same way.
\end{proof}

Consequently, Proposition 3.3 is proved.
\end{proof}

Finally, we want to prove the second conclusion of Theorem 1.1 ($p=3$ if $G$ is of type $F_4$ and $p=5$ if $G$ is of type $E_8$).
First of all, since for any generic point $\zeta$ of $Y$, one has
\[1 \notin \text{deg}Ch_0(X_{F(\zeta)})\Leftrightarrow J_p(G_{F(\zeta)})=(1),\]
by Proposition 3.1 and in view of what has already been done, it is sufficient to prove the following lemma
to get the second conclusion.

\begin{lemme}
\textit{Under the assumption $J_p(G)=(1)$, one has
$Ch^{p+1}(\mathcal{R}_p(G))=0$.}
\end{lemme}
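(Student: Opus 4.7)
The plan is to extend the inductive technique used in Lemmas 3.7 and 3.9 one codimension further. The new crucial input is Proposition 2.3: it identifies $\tau^{p+1/p+2}(\mathfrak{B})$ with $Ch^{p+1}(\mathfrak{B})$, precisely where the standard identification $\tau^{i/i+1}\simeq Ch^i$ would otherwise fail.

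First I would apply Proposition 2.1 to obtain an extension $E/F$ and a cocycle $\xi'\in H^1(E,G_0)$ such that the topological and $\gamma$-filtrations coincide on $K(\mathfrak{B}')$, with $\mathfrak{B}':={}_{\xi'}(G_0/B)$. The contradiction argument of Lemma 3.9 shows that $J_p(G')=(1)$. Since $K(\mathfrak{B}')\simeq K(G_0/B)$ admits a description independent of $\text{char}(E)$, I may compute in characteristic $0$, where Theorem 3.10 gives $Ch^j(\mathcal{R}_p(G'))=\mathbb{Z}/p\mathbb{Z}$ exactly for $j=0$ or $j=k(p+1)-p+1$ with $1\leq k\leq p-1$, and $0$ otherwise. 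In particular $Ch^{p+1}(\mathcal{R}_p(G'))=0$, since the equation $k(p+1)=2p$ has no integer solution for $p\geq 2$. Combined with the decomposition (3.6) this yields $Ch^{p+1}(\mathfrak{B}')=(\mathbb{Z}/p\mathbb{Z})^{\oplus(a_{p-1}+a_{p+1})}$. Applying Proposition 2.3 to $\mathfrak{B}'$ and invoking the coincidence of filtrations on $K(\mathfrak{B}')$ gives $\gamma^{p+1/p+2}(\mathfrak{B}')=(\mathbb{Z}/p\mathbb{Z})^{\oplus(a_{p-1}+a_{p+1})}$; by Remark 2.4 this transfers to $\gamma^{p+1/p+2}(\mathfrak{B})$.

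Next I need $\tau^{p+1}(\mathfrak{B})=\gamma^{p+1}(\mathfrak{B})$. For $G$ of type $E_8$ this is already a byproduct of the proof of Lemma 3.9. For $G$ of type $F_4$ one runs the same order-matching step once: since $\xi$ is generic, $\tau^3(\mathfrak{B})=\gamma^3(\mathfrak{B})$, and since
\[|\gamma^{3/4}(\mathfrak{B})|=3^{a_1+a_3}=|Ch^3(\mathfrak{B})|=|\tau^{3/4}(\mathfrak{B})|\]
(the left side via Remark 2.4 and the characteristic $0$ computation above, the right side via Lemma 3.7 and (3.6)), the short exact sequence $0\to\tau^4/\gamma^4\to\gamma^{3/4}(\mathfrak{B})\to\tau^{3/4}(\mathfrak{B})\to 0$ forces $\tau^4=\gamma^4$. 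With $\tau^{p+1}=\gamma^{p+1}$ in hand, the analogous exact sequence
\[0\to\tau^{p+2}/\gamma^{p+2}\to\gamma^{p+1/p+2}(\mathfrak{B})\to\tau^{p+1/p+2}(\mathfrak{B})\to 0,\]
together with Proposition 2.3 identifying the right-hand term as $Ch^{p+1}(\mathfrak{B})\simeq(\mathbb{Z}/p\mathbb{Z})^{\oplus(a_{p-1}+a_{p+1})}\oplus Ch^{p+1}(\mathcal{R}_p(G))$ (via (3.6) and the already-established vanishing of $Ch^j(\mathcal{R}_p(G))$ for $3\leq j\leq p$), forces by order comparison with $\gamma^{p+1/p+2}(\mathfrak{B})$ the desired $Ch^{p+1}(\mathcal{R}_p(G))=0$. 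The main conceptual obstacle is precisely the failure of $\tau^{i/i+1}\simeq Ch^i$ at the boundary $i=p+1$, which Proposition 2.3 was tailor-made to repair.
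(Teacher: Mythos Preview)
Your proposal is correct and is precisely the argument the paper has in mind: it simply says ``Thanks to Proposition 2.3, one can prove the lemma by proceeding in exactly the same way Lemma 3.9 has been proved'', and you have faithfully unpacked that sentence, including the two places where Proposition 2.3 replaces the standard $i<p+1$ identification (once on $\mathfrak{B}'$ to compute $\gamma^{p+1/p+2}$, and once on $\mathfrak{B}$ to read off $Ch^{p+1}$ from $\tau^{p+1/p+2}$). Your separate treatment of the $F_4$ case to reach $\tau^{p+1}=\gamma^{p+1}$ is also the intended step.
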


\begin{proof}
Thanks to Proposition 2.3, one can prove the lemma by proceeding in exactly the same way Lemma 3.9 has been proved.
\end{proof}

This concludes the proof of Theorem 1.1.

\bibliographystyle{acm} 
\bibliography{biblio}
\end{document}